\numberwithin{equation}{section}
\newtheorem{theorem}{Theorem}
\theoremstyle{definition}
\newtheorem{example}[theorem]{Example}
\newtheorem*{definition}{Definition}
\newtheorem{remark}[theorem]{Remark}
\theoremstyle{remark}
\newenvironment{romenumerate}{\begin{enumerate}
 }{\end{enumerate}}
\newcounter{oldenumi}
{\setcounter{oldenumi}{\value{enumi}}
\begin{romenumerate} \setcounter{enumi}{\value{oldenumi}}}
{\end{romenumerate}}
\newcounter{thmenumerate}
\newcounter{xenumerate}   
\newcommand{\refT}[1]{Theorem~\ref{#1}}
\newcommand{\refR}[1]{Remark~\ref{#1}}
\newcommand{\refand}[2]{\ref{#1} and~\ref{#2}}
\newcommand\marginal[1]{\marginpar{\raggedright\parindent=0pt\tiny #1}}
\xdef\klockan{\the\count1.0\the\count255}
\xdef\klockan{\the\count1.\the\count255}\fi
\newcommand\set[1]{\ensuremath{\{#1\}}}
\newcommand\bigset[1]{\ensuremath{\bigl\{#1\bigr\}}}
\def\rompar(#1){\textup(#1\textup)}    
\def\xexp(#1){e^{#1}}
\newcommand\iid{i.i.d.\spacefactor=1000}    
\newcommand\ie{i.e.\spacefactor=1000}
\newcommand\eqd{\overset{\mathrm{d}}{=}}
\newcommand\bbR{\mathbb R}
\newcommand\bbN{\mathbb N}
\newcommand\bbQ{\mathbb Q}
\newcounter{CC}
\newcounter{cc}
\renewcommand\P{\operatorname{\mathbb P{}}}
\newcommand\ga{\alpha}
\newcommand\gf{\varphi}
\newcommand\go{\omega}
\newcommand\gO{\Omega}
\newcommand\gs{\sigma}
\newcommand\cA{\mathcal A}
\newcommand\cB{\mathcal B}
\newcommand\cC{\mathcal C}
\newcommand\cD{\mathcal D}
\newcommand\cF{\mathcal F}
\newcommand\cS{{\mathcal S}}
\newcommand\tf{{\tilde f}}
\newcommand\tg{{\tilde g}}
\newcommand\tX{{\tilde X}}
\newcommand\ett[1]{\boldsymbol1[#1]} 
\def\[#1]{[\![#1]\!]}
\newcommand\qw{^{-1}}
\renewcommand{\=}{:=}
\newcommand\oi{[0,1]}
\newcommand\inn{_1^n}
\newcommand\OFP{\ensuremath{(\Omega,\cF,P)}}
\newcommand\OIL{\ensuremath{(\oi,\cB,dx)}}
\newcommand\bbNx{\bbN^*}
\newcommand\tOFP{\ensuremath{(\tgO,\tcF,\tP)}}
\newcommand\tgO{\tilde\gO}
\newcommand\tcF{\tilde\cF}
\newcommand\tP{\tilde P}
\newcommand\soo{\mathfrak S_\infty}
\newcommand\mi{{m_i}}
\newcommand\tiota{\tilde\iota}
\newcommand{\Lovasz}{Lov\'asz}
\newcommand\REM[1]{{\raggedright\texttt{[#1]}\par\marginal{XXX}}}
\newcommand\urladdrx[1]{{\urladdr{\def~{{\tiny$\sim$}}#1}}}
\begin{document}
\title[Representation of multivariate functions]
{Standard representation of multivariate functions 
 on a general probability space}

\date{December 21, 2007} 

\author{Svante Janson}
\address{Department of Mathematics, Uppsala University, PO Box 480,
SE-751~06 Uppsala, Sweden}
\email{svante.janson@math.uu.se}
\urladdrx{http://www.math.uu.se/~svante/}

\subjclass[2000]{} 

\begin{abstract} 
It is well-known that a random variable, \ie{} a function defined on a
probability space, with values in a Borel space, can be represented on
the special probability space consisting of the unit interval with
Lebesgue measure.
We show 
an extension of this to multivariate
functions.
This is motivated by some recent constructions of random graphs.
\end{abstract}

\maketitle


A fundamental feature of probability theory is that random variables
are defined as functions on a probability space \OFP, but the actual
probability space has very little importance, and often no importance
at all. For example, we have the following well-known basic result,
showing that 
as long as we consider a single random variable, we may assume that
the probability space is \OIL, the unit interval with the Borel
$\gs$-field and Lebesgue measure. 
\begin{theorem}\label{T0}
  If\/ $f:\OFP\to\bbR$ is any random variable, then there exists a
  random variable $\tf:\OIL\to\bbR$ such that $f$ and $\tf$ have the
  same distribution.
\end{theorem}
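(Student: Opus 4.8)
The plan is to realize $\tf$ as the \emph{quantile function} (the generalized inverse of the distribution function) of $f$, so that Lebesgue measure on $\oi$ plays the role of a uniform random variable and the classical ``inverse transform'' produces the desired law.

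First I would introduce the distribution function $F(x)\=\P(f\le x)$ of $f$; it is non-decreasing and right-continuous, with $F(x)\to0$ as $x\to-\infty$ and $F(x)\to1$ as $x\to+\infty$. Define the quantile function $F\qw\colon(0,1)\to\bbR$ by
$$F\qw(u)\=\inf\set{x\in\bbR: F(x)\ge u},$$
and set $\tf(u)\=F\qw(u)$ for $u\in(0,1)$, with $\tf$ given an arbitrary value at the two endpoints (a Lebesgue-null set). Since $F\qw$ is non-decreasing, it is Borel measurable, so $\tf$ is a bona fide random variable on $\OIL$.

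The heart of the argument is the equivalence
$$F\qw(u)\le x \iff u\le F(x), \qquad u\in(0,1),\ x\in\bbR.$$
The implication ``$\Leftarrow$'' is immediate, since $u\le F(x)$ puts $x$ in the set whose infimum defines $F\qw(u)$. For ``$\Rightarrow$'' I would invoke right-continuity: writing $a\=F\qw(u)\le x$, choose $y_n\downto a$ with $F(y_n)\ge u$; then $F(a)=\lim_n F(y_n)\ge u$ by right-continuity, and monotonicity gives $F(x)\ge F(a)\ge u$. I expect this right-continuity step to be the only genuinely delicate point, since it is exactly where the choice of $\inf$ (rather than, say, $\sup$) and the regularity of $F$ must interact correctly.

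Granting the equivalence, I would compute the distribution function of $\tf$ directly. For each $x\in\bbR$,
$$\gl\set{u\in(0,1): \tf(u)\le x} = \gl\set{u\in(0,1): u\le F(x)} = F(x),$$
where $\gl$ denotes Lebesgue measure and we used $0\le F(x)\le1$. Thus $\tf$ on $\OIL$ has distribution function $F$, the same as $f$, and since the law of a real random variable is determined by its distribution function, $f$ and $\tf$ have the same distribution, as required.
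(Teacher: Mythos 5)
Your proof is correct and is essentially the paper's own approach: the paper simply cites the inverse-transform construction $\tf(x)\=\inf\set{y:\P(f\le y)>x}$ without further argument, and you supply the standard verification for the (equivalent up to a null set) left-continuous quantile $\inf\set{y:F(y)\ge u}$. The key equivalence $F\qw(u)\le x\iff u\le F(x)$ and the right-continuity step are handled correctly.
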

A standard construction of $\tf$ is to take the right-continuous
inverse of the distribution function $\P(f\le x)$, 
$\tf(x)\=\inf\set{y:\P(f\le y)>x}$. 

Moreover, it is also well-known that
\refT{T0} extends to vector-valued random variables and random
variables with values in more general spaces.
To state a precise result, we recall that a \emph{Borel space}
\cite[Appendix A1]{Kallenberg}, 
also called Lusin space \cite[III.16, III.20(b)]{DM}, is a measurable 
space that is isomorphic to a Borel subset of \oi.
Every Polish space (a complete separable metric space) with
its Borel $\gs$-field is a Borel space
\cite[Appendix A1]{Kallenberg}, 
\cite[III.17]{DM}.
(For example, this includes $\bbR^n$, which gives the extension to
vector valued random variables.)

\begin{theorem}\label{T0S}
  If\/ $f:\OFP\to S$ is any random variable with values in a Borel
  space $S$, then there exists a
  random variable $\tf:\OIL\to S$ such that $f$ and $\tf$ have the
  same distribution.
\end{theorem}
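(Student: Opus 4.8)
The plan is to reduce \refT{T0S} to the scalar case \refT{T0} by transporting $f$ through the measurable isomorphism built into the definition of a Borel space. By that definition there is a measurable isomorphism $\gf\colon S\to B$, where $B$ is a Borel subset of $\oi$; in particular both $\gf$ and its inverse $\gf\qw\colon B\to S$ are measurable. Composing, $\gf\circ f\colon\OFP\to\bbR$ is an ordinary real-valued random variable, since its values lie in $B\subseteq\oi\subseteq\bbR$. Hence \refT{T0} supplies a random variable $\tg\colon\OIL\to\bbR$ with the same distribution as $\gf\circ f$.

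First I would observe that the law of $\gf\circ f$ is carried by $B$, because $\gf\circ f\in B$ always; as $\tg$ has the same law, $\tg(x)\in B$ for Lebesgue-almost every $x$. On the full-measure measurable set $\set{x:\tg(x)\in B}$ I can therefore define $\tf\=\gf\qw\circ\tg$, and set $\tf$ equal to an arbitrary fixed point $s_0\in S$ on the exceptional null set. Measurability of $\gf\qw$ then makes $\tf\colon\OIL\to S$ measurable.

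It remains to check that $f$ and $\tf$ agree in distribution. For a measurable $A\subseteq S$, the set $\gf(A)$ is exactly the preimage $\set{b\in B:\gf\qw(b)\in A}$, hence measurable in $B$ and therefore Borel in $\oi$ since $B$ itself is Borel. Away from the null set we have $\set{\tf\in A}=\set{\tg\in\gf(A)}$, the two conditions coinciding because $\gf(A)\subseteq B$. Consequently the Lebesgue measure of $\set{\tf\in A}$ equals that of $\set{\tg\in\gf(A)}$, which equals $\P\bigpar{\gf\circ f\in\gf(A)}=\P\bigpar{f\in A}$, so that $\tf\eqd f$.

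There is no serious obstacle here: the entire content is the definition of a Borel space together with the univariate \refT{T0}. The one point deserving care is the verification that $\tg$ lands in $B$ almost surely, so that the composition $\gf\qw\circ\tg$ is actually defined; but this is immediate from the equality of laws, and redefining $\tf$ on the resulting null set affects neither its measurability nor its distribution.
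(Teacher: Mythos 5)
Your proposal is correct and follows essentially the same route as the paper: transport $f$ through the Borel isomorphism $\gf\colon S\to B\subseteq\oi$, apply \refT{T0} to the real-valued variable $\gf\circ f$, and pull back via $\gf\qw$. The only cosmetic difference is that the paper extends $\gf\qw$ to all of $\bbR$ by a constant $s_0$ outside $B$, whereas you redefine $\tf$ on the null set where $\tg\notin B$; these are the same fix for the same (minor) issue.
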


\begin{proof}
This is an almost trivial 
extension of \refT{T0}: By assumption, there exists an
isomorphism $\gf:S\to E$, where $E\subseteq\oi$ is a Borel set. This
means that $\gf$ is a bijection, and that $\gf$ and $\gf\qw:E\to S$ are
measurable. Extend $\gf\qw$ to $\bbR$ by defining $\gf\qw(x)=s_0$
when $x\notin E$ for some arbitrary $s_o\in S$.
Then, $g\=\gf\circ f:\OFP\to E\subset\bbR$ is a random
variable, and \refT{T0} gives $\tg:\OIL\to\bbR$ with 
$\tg\eqd g$. Let $\tf\=\gf\qw\circ\tg:\OIL\to S$.
\end{proof}

\begin{remark}
  \label{Rcont}
In fact, every uncountable Borel space $T$ is isomorphic to \oi{}
\cite[Appendix III.80]{DM}. 
If $\mu$ is any continuous
probability measure 
(\ie, a probability measure such that every point has measure 0) 
on a Borel space $T$, then $T$ necessarily is
uncountable, and thus there is an isomorphism $\ga:T\to\oi$ which maps
$\mu$ to some continuous Borel measure $\nu$ on \oi, and hence $\ga$
is an
isomorphism $(T,\mu)\to(\oi,\nu)$. 
Further, if we let $F_\nu(x)\=\nu[0,x]$, then $F_\nu:\oi\to\oi$ is a
continuous function that maps $\nu$ to the Lebesgue measure
$dx$. Hence, in \refT{T0}, $\tf\circ F_\nu\circ\ga:(T,\mu)\to\bbR$ has 
$\tf\circ F_\nu\circ\ga\eqd\tf\eqd f$, and similarly for \refT{T0S}.
Consequently, we can replace $\OIL$ by $(T,\mu)$ in Theorems
\refand{T0}{T0S}, for any Borel space $T$ with a continuous
probability measure $\mu$.
\end{remark}

The purpose of this note is to give an 
elementary proof of an extension of \refT{T0} to 
functions $f:\gO^m\to\bbR$ or $f:\gO^m\to S$
of several variables, where $m\ge1$.
This is a folklore result, but we do not know any reference, so we
provide a detailed statement and proof.
Our interest in this problem comes from the following example, with a
construction of random graphs used by, for example,
\citet{LSz} and \citet{SJ178} (in different contexts), 
see further \citet{BCL1,BCL1ii,SJ209} 
and further references in these papers,
and
an extension of this example studied in \citet{clustering}, where also
functions $f:\gO^m\to\oi$ with 
$m>2$ are used.
We use the notation $[n]\=\set{1,\dots,n}$ if
  $n<\infty$ and
 $[\infty]\=\bbN\=\set{1,2,\dots}$.

\begin{example}
  Let $f:\gO^2\to\oi$ be a symmetric measurable function and let $1\le
  n\le\infty$.
Define a random graph on the vertex set 
$[n]$
by first taking
$n$ \iid{} random variables $(X_i)\inn$ in $\gO$ with
distribution $P$, and then letting, conditioned on these random variables,
the edges $ij$ with $i<j$ appear independently, with the
probability of an edge $ij$ equal to $f(X_i,X_j)$.
\end{example}

If we choose another probability space $\tOFP$ and a
symmetric function $\tf:\tgO^2\to\oi$, we obtain the same
distribution of the corresponding two random graphs if the joint
distribution of the families $(f(X_i,X_j))_{i<j}$ and
$(\tf(\tX_i,\tX_j))_{i<j}$ are equal, where $\tX_i$ has the
distribution $\tP$ on $\tgO$.
This motivates the following definition, where $\soo$ denotes the set
of all permutations of $\bbN$.
(For notational convenience we write $f(X_1,\dots,X_m)$ also when
$m=\infty$; this should be interpreted as $f(X_1,X_2,\dots)$.)

\begin{definition}
  Let \OFP{} be a probability space and let 
$f_\ga:\gO^{m_\ga}\to S_\ga$, $\ga\in A$, be a collection of
  measurable functions, where 
$A$ is any index set,
$m_\ga\in\bbNx\=\bbN\cup\set\infty$ 
are positive integers or
  $\infty$, and $S_\ga$ are measurable  spaces.
We say that the family $(f_\ga)_\ga$ can be \emph{represented on a
  probability space $\tOFP$} if there exists 
a collection of measurable functions
$\tf_\ga:\tgO^{m_\ga}\to S_\ga$, $\ga\in A$, such that
if $X_1,X_2,\dots$ are \iid{} random elements of $\gO$ with distribution $P$,
  and 
if $\tX_1,\tX_2,\dots$ are \iid{} random elements of $\tgO$ with distribution
  $\tP$,
then the collections
$\bigl\{f_\ga(X_{\gs(1)},\dots,X_{\gs(m_\ga)}): \ga\in  A,\,\gs\in\soo\bigr\}$
and
$\bigl\{\tf_\ga(\tX_{\gs(1)},\dots,\tX_{\gs(m_\ga)}): 
 \ga\in A,\,\gs\in\soo\bigr\}$
have the same distribution.
\end{definition}
Equivalently, if we regard each $f_\ga$ as being defined on
$\gO^\infty$ by ignoring all but the $m_\ga$ first coordinates, and
let $\soo$ act on $\gO^\infty$ in the natural way, then the
collections
$\bigset{f_\ga\circ\gs:\ga\in A,\,\gs\in\soo}$ and 
$\bigset{\tf_\ga\circ\gs:\ga\in A,\,\gs\in\soo}$
of random variables defined of $\gO^\infty$ and $\tgO^\infty$, respectively,
have the same distribution. 

If we have only a single function $f:\gO^m\to\cS$, 
it is further equivalent that the collections
$f(X_{i_1},\dots,X_{i_m})$
and $\tf(\tX_{i_1},\dots,\tX_{i_m})$, indexed by sequences
$(i_1,\dots,i_m)$ of distinct integers in $\bbN$, have the same distribution.
In principle, the definition could be stated in this way for  families
\set{f_\ga} and \set{\tf_\ga} too, with in general varying (and
possibly unbounded) $m_\ga$, but we leave it to the reader to find a nice
formulation in this form.

\begin{remark}\label{Rcomplete}
We may equip $\gO^{m_\ga}$ either with the product $\gs$-field
$\cF^{m_\ga}$ or with its completion. This does not matter, because if
$f_\ga$ is measurable with respect to the completed $\gs$-field, it
can be modified on a set of (product) measure 0 such that it becomes
measurable with respect to $\cF^{m_\ga}$, and this does not change any
of the distributions above. Similarly, we may assume that $\tf_\ga$ is
measurable with respect to $\tcF^{m_\ga}$.
\end{remark}

\begin{remark}
  As is well-known, the representing functions $\tf_\ga$ are in
  general far from unique.
See, for example,
\cite[Theorem 7.28]{Kallenberg:exch} for a general result on
  equivalence of representations.
\end{remark}

The main result is that every countable collection of functions on any
probability space, with values in a Borel space, can be represented on $\OIL$.

\begin{theorem}\label{T1}
Let $f_i:\gO^{m_i}\to S_i$, $i=1,2,\dots$, be a finite or countable family of
  measurable functions, where $m_i\in\bbNx$ and $S_i$ are Borel spaces.
Then $(f_i)_i$ can be represented on $\OIL$.

Moreover, if every $f_i$ is symmetric, then we can choose the
  representing functions on $\OIL$ to be symmetric too.
\end{theorem}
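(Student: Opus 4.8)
The plan is to push the arbitrary space $\OFP$ down onto a \emph{standard} Borel space that records precisely the information extracted by the countably many functions $f_i$, and then to quote \refT{T0S}; throughout I keep track of symmetry, since a symmetric $f_i$ must yield a symmetric representative. First I would dispose of the targets exactly as in the proof of \refT{T0S}: fix Borel isomorphisms $\gf_i\colon S_i\to E_i\subseteq\oi$, replace $f_i$ by $\gf_i\circ f_i\colon\gO^{m_i}\to\oi$, and recover the $S_i$-valued representative at the end by composing with $\gf_i\qw$. This substitution manifestly preserves symmetry, so I may assume $S_i=\oi$ for all $i$.

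The crux is to linearize the $f_i$ through a single map into a standard Borel space. Each $f_i$ is measurable for the product $\gs$-field $\cF^{\otimes m_i}$, and a routine argument shows any such function is already measurable for $\cG_i^{\otimes m_i}$ with $\cG_i\subseteq\cF$ \emph{countably} generated: the countably many sets $f_i\qw\bigl((-\infty,q]\bigr)$, $q\in\bbQ$, each lie in the $\gs$-field generated by countably many measurable rectangles, and one collects the countably many sides of all of them. Taking $\cG\=\gs\bigl(\bigcup_i\cG_i\bigr)$, still countably generated, every $f_i$ is $\cG^{\otimes m_i}$-measurable. Letting $B_1,B_2,\dots$ generate $\cG$ and $\eta\colon\gO\to\cC\=\set{0,1}^{\bbN}$, $\eta(\go)=\bigl(\ett{\go\in B_k}\bigr)_{k}$, the space $\cC$ is compact metric (hence Borel), $\gs(\eta)=\cG$, and, $\cC$ being second countable, $\cG^{\otimes m_i}=\gs(\eta^{m_i})$ for the coordinatewise map $\eta^{m_i}\colon\gO^{m_i}\to\cC^{m_i}$. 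The Doob--Dynkin factorization lemma then yields measurable $h_i\colon\cC^{m_i}\to\oi$ with $f_i=h_i\circ\eta^{m_i}$. I expect this linearization to be the main obstacle: it is where the multivariate nature bites, since one must encode the completely arbitrary $\gO$ into one fixed Borel space $\cC$ that standardizes all the $f_i$ simultaneously, uniformly also for those with $m_i=\infty$.

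With the linearization in hand the rest is soft. Put $Q\=\eta_*P$, a probability measure on the Borel space $\cC$; applying \refT{T0S} to the identity map of $(\cC,Q)$ produces measurable $\theta\colon\oi\to\cC$ with $\theta_*(dx)=Q$. I then set $\tf_i\=h_i\circ\theta^{m_i}$ (and compose with $\gf_i\qw$ to return to $S_i$). To verify the representation, take $X_1,X_2,\dots$ \iid{} $P$ and $\tX_1,\tX_2,\dots$ \iid{} $dx$; then $\bigl(\eta(X_j)\bigr)_j$ and $\bigl(\theta(\tX_j)\bigr)_j$ are both \iid{} sequences of law $Q$, hence equal in distribution as $\cC^{\bbN}$-valued random elements. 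Because $f_i(X_{\gs(1)},\dots)=h_i\bigl(\eta(X_{\gs(1)}),\dots\bigr)$ and $\tf_i(\tX_{\gs(1)},\dots)=h_i\bigl(\theta(\tX_{\gs(1)}),\dots\bigr)$, the entire collection indexed by $i$ and $\gs\in\soo$ is one fixed measurable functional of the underlying \iid{} sequence, so equality of the sequences' laws forces equality of the two collections' distributions, which is the assertion.

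Finally, symmetry: since $\tf_i=h_i\circ\theta^{m_i}$, it suffices to choose $h_i$ symmetric. For finite $m_i$ I would replace $h_i$ by its average $(m_i!)\qw\sum_{\pi}h_i\circ\pi$ over permutations $\pi$ of $\set{1,\dots,m_i}$; symmetry of $f_i$ forces each $h_i\circ\pi$ to agree with $h_i$ on the range of $\eta^{m_i}$, so the averaged function still factors $f_i$ and is genuinely symmetric. For $m_i=\infty$ the same computation shows $h_i$ is $Q^{\bbN}$-\aex{} invariant under every finite permutation, so by the Hewitt--Savage zero--one law it is \aex{} constant and may be replaced by a constant, which is trivially symmetric. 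In either case the modification alters $h_i$ only on a $Q^{m_i}$-null set, hence $\tf_i$ only on a Lebesgue-null set, leaving the distributional identity intact.
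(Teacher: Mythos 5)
Your proof is correct and follows essentially the same route as the paper: reduce to $\oi$-valued targets via Borel isomorphisms, factor all the $f_i$ simultaneously through a single countably generated sub-$\gs$-field realized as a map into the Cantor set $\set{0,1}^{\bbN}$, and transport the pushforward measure back to $\OIL$ by applying \refT{T0S} to the identity map. The only divergence is that you treat the symmetry claim (which the paper dismisses with ``it is clear'') in detail, averaging over permutations for finite $m_i$ and invoking Hewitt--Savage for $m_i=\infty$; this is a worthwhile elaboration, since the factorizing function is a priori determined only on the range of $\eta^{m_i}$ and need not be symmetric off it.
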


\begin{proof}
  First, by the argument in the proof of \refT{T0S}, it suffices to
  consider the 
  case $S_i=\bbR$ for all $i$. Further, we may by \refR{Rcomplete}
  assume that each $f_i$ is measurable with respect to $\cF^{m_\ga}$,
  without completion.

Let $\cA\=\set{\cC\subseteq\cF:|\cC|\le\aleph_0}$ 
be the collection of all countable families of measurable sets in
$\gO$.
Since a countable union $\bigcup_j \cC_j$ of families $\cC_j\in\cA$
also belongs to $\cA$, it is easily seen that
$\bigcup_{\cC\in\cA}\gs(\cC)^m$ is a $\gs$-field on $\gO^m$, for every
$m\le\infty$; since
further, by the definition of product $\gs$-fields, every set in
$\cF^m$ belongs to a sub-$\gs$-field generated by a countable number
of cylinder sets, and thus to some $\gs(\cC)^m$, it follows that
$\cF^m=\bigcup_{\cC\in\cA}\gs(\cC)^m$.
Moreover, every $f_i$ is measurable with respect to the
$\gs$-field generated by the countable family 
$\set{\go'\in\gO^{m_\ga}:f_i(\go')<r}$, $r\in\bbQ$, and 
each of these thus
belongs to the product $\gs$-field $\cF_{i,r}^{m_\ga}$ defined by some
countably generated sub-$\gs$-field $\cF_{i,r}$ of $\cF$. 
Consequently, there exists a countably generated sub-$\gs$-field $\cF_0$
of $\cF$ such that each $f_i$ is $\cF_0^{m_i}$ measurable.

Next, let $A_1,A_2,\dots$ be a sequence of subsets of $\gO$ that
generate $\cF_0$. Let $h:\gO\to \cD\=\set{0,1}^\infty$ be defined by
$h(x)=(\ett{x\in A_i})_{i}$. 
($\cD$ is, topologically, the Cantor set.)
Then $\cF_0$ equals the $\gs$-field
generated on $\gO$ by $h$, and thus $\cF_0^m$ equals the $\gs$-field
on $\gO^m$ generated by $h^m\=(h,\dots,h):\gO^m\to \cD^m$.
Since $f_i$ is $\cF_0^\mi$-measurable, it follows that $f_i=g_i\circ h^\mi$
for some measurable function $g_i:\cD^\mi\to\bbR$. If we let $\tgO=\cD$,
let $\tcF$ be the Borel $\gs$-field on $\cD$, and let $\tP$ be the
probability measure induced by $h$, it follows immediately that
the family $(g_i)_i$ gives a representation of $(f_i)_i$ on $(\cD,\tP)$.

Applying \refT{T0S} to the identity function
$\iota:(\cD,\tP)\to(\cD,\tP)$, we see that there exists a function
$\tiota:\oi\to\cD$ mapping the Lebesgue measure $dx$ to $\tP$.
Consequently, the mappings $g_i\circ \tiota^{m_i}\to\bbR$ represent
$(f_i)_i$ on \OIL.

It is clear that $g_i$ is symmetric if every $f_i$ is.
\end{proof}

The symmetry statement in \refT{T1} can be extended to include partial
symmetries, antisymmetries, \dots; we leave this to the reader.

\begin{remark}
\refT{T1} is related to the theory for
exchangeable arrays by \citet{Aldous} and \citet{Hoover}, see
\citet[Chapter 7]{Kallenberg:exch} for a detailed exposition.
Consider for simplicity a single function $f:\gO^2\to\cS$.
It is obvious that the random variables
$f(X_{i},X_{j})$, $i\neq j$, form an exchangeable family.
Conversely, the theorem by Aldous--Hoover 
\cite[Theorem 7.22]{Kallenberg:exch}
says that every exchangeable family can be represented on $\OIL$, but
in the more general form $g(X_\emptyset,X_i,X_j,X_{ij})$. It seems
likely that a further study of the representations can lead to a proof
that we may in this case take $g$ independent of $X_\emptyset$ and
$X_{ij}$, which would yield \refT{T1} in this case. (And presumably
this could be extended to the general case.)
Nevertheless, such a proof would be highly technical, and hardly
shorter than the elementary proof given above, so we have not pursued this.  
\end{remark}

\newcommand\AAP{\emph{Adv. Appl. Probab.} }
\newcommand\JAP{\emph{J. Appl. Probab.} }
\newcommand\JAMS{\emph{J. \AMS} }
\newcommand\MAMS{\emph{Memoirs \AMS} }
\newcommand\PAMS{\emph{Proc. \AMS} }
\newcommand\TAMS{\emph{Trans. \AMS} }
\newcommand\AnnMS{\emph{Ann. Math. Statist.} }
\newcommand\AnnPr{\emph{Ann. Probab.} }
\newcommand\CPC{\emph{Combin. Probab. Comput.} }
\newcommand\JMAA{\emph{J. Math. Anal. Appl.} }
\newcommand\RSA{\emph{Random Struct. Alg.} }
\newcommand\ZW{\emph{Z. Wahrsch. Verw. Gebiete} }
\newcommand\DMTCS{\jour{Discr. Math. Theor. Comput. Sci.} }

\newcommand\AMS{Amer. Math. Soc.}
\newcommand\Springer{Springer-Verlag}
\newcommand\Wiley{Wiley}

\newcommand\vol{\textbf}
\newcommand\jour{\emph}
\newcommand\book{\emph}
\newcommand\inbook{\emph}
\def\no#1#2,{\unskip#2, no. #1,} 
\newcommand\toappear{\unskip, to appear}

\newcommand\webcite[1]{
   \penalty0\texttt{\def~{{\tiny$\sim$}}#1}\hfill\hfill}
\newcommand\webcitesvante{\webcite{http://www.math.uu.se/~svante/papers/}}
\newcommand\arxiv[1]{\webcite{arXiv:#1.}}

\def\nobibitem#1\par{}

\end{document}